\newif\ifxetexorluatex
\newtheorem{thm}{Theorem}
\newtheorem{lemma}[thm]{Lemma}
\theoremstyle{definition}
\newtheorem*{remark}{Remark}
\numberwithin{thm}{section}
\numberwithin{equation}{section}
\DeclareMathOperator{\lcm}{lcm}
\title{Lacunarity of Han-Nekrasov-Okounkov $q$-series }
\author[K. Gallagher]{Katherine Gallagher}
\address{Department of Mathematics, University of Notre Dame, Notre Dame, IN  46556}
\email{kgalla17@nd.edu}
\author[L. Li]{Lucia Li}
\address{Department of Mathematics, Wellesley College, Wellesley, MA 02481}
\email{lucia.li@wellesley.edu}
\author[K. Vassilev]{Katja Vassilev}
\address{Department of Mathematics, Princeton University, Princeton, NJ 08544}
\email{kdv@princeton.edu}
\begin{document}
\maketitle
\begin{abstract}
A power series is called lacunary if ``almost all'' of its coefficients are zero. Integer partitions have motivated the classification of lacunary specializations of Han's extension of the Nekrasov-Okounkov formula. More precisely, we consider the modular forms \[F_{a,b,c}(z) \coloneqq  \frac{\eta(24az)^a \eta(24acz)^{b-a}}{\eta(24z)},\] defined in terms of the Dedekind $\eta$-function, for integers $a,c \geq 1$ where $b \geq 1$ is odd throughout. Serre \cite{Serre} determined the lacunarity of the series when $a = c = 1$. Later, Clader, Kemper, and Wage \cite{REU} extended this result by allowing $a$ to be general, and completely classified the $F_{a,b,1}(z)$ which are lacunary. Here, we consider all $c$ and show that for ${a \in \{1,2,3\}}$, there are infinite families of lacunary series. However, for $a \geq 4$, we show that there are finitely many triples $(a,b,c)$ such that $F_{a,b,c}(z)$ is lacunary. In particular, if $a \geq 4$, $b \geq 7$, and $c \geq 2$, then $F_{a,b,c}(z)$ is not lacunary. Underlying this result is the proof the $t$-core partition conjecture proved by Granville and Ono \cite{Granville}.

\end{abstract}


\section{Introduction and Statement of Results}
A series $\displaystyle{\sum_{n=0}^{\infty} a(n)q^n}$ is said to be \textit{lacunary} if ``almost all'' of its coefficients are zero; that is, if we have 
$$ \displaystyle{\lim_{x \rightarrow \infty} \frac{ \# \{ 0 \leq n < x : a(n)=0 \} }{x}=1}.
$$ 
Two examples of generating functions which are lacunary come from identities due to Euler and Jacobi:

\begin{equation}\label{Euler}
\prod_{n = 1}^{\infty} \left( 1-q^{n} \right) = \sum_{m= -\infty}^{\infty} (-1)^{m}q^{\frac{3m^{2}+m}{2}}
\end{equation}

\begin{equation}\label{Jacobi1}
\prod_{n = 1}^\infty(1-q^{n})^3 = \sum_{m =-\infty}^{\infty} (-1)^{m}(2m+1)q^{\frac{m^2+m}{2}}.
\end{equation}

These combinatorial identities all have partition-theoretic interpretations. For example, Euler's identity (\ref{Euler}) gives a recurrence for calculating $p(n)$, the number of partitions of $n$. Further, they are all related to modular forms by the Dedekind $\eta$-function,
\begin{equation} \label{eta}
\eta(z) \coloneqq q^{\frac{1}{24}} \prod_{n = 1}^\infty (1-q^{n})
\end{equation}
where $q=e^{2\pi i z}$ and $\text{Im}(z)>0$. It is well-known that $\eta(24z)$ is a weight $\frac{1}{2}$ modular cusp form on $\Gamma_{0}(576)$ with Nebentypus character $\left( \frac{12}{n} \right)$. For background on modular forms, see \cite{Ono}.

In light of (\ref{Euler}) and (\ref{Jacobi1}), it is natural to investigate the lacunarity of $$\displaystyle{f_{r}(z) \coloneqq \prod_{n = 1}^\infty(1-q^{n})^{r}= \sum_{m=0}^{\infty} \tau_{r}(m)q^{m}}.$$
The series $f_{r}(z)$ is closely related to modular forms via the Dedekind $\eta$-funtion. Indeed, $\eta(24z)^{r}=q^{r}f_{r}(24z)$ is a modular cusp form of weight $\frac{r}{2}$ on $\Gamma_{0}(576)$. Via the connection of $f_{r}(z)$ to modular forms, Serre \cite{Serre} investigated the lacunarity of $\eta(24z)^{r}$ and thus $f_{r}(z)$: he proved that, if $r$ is even, then $f_{r}(z)$ is lacunary if and only if $r \in \{ 2,4,6,8,10,14,26 \}$. 

\begin{remark} If $r$ is odd, the lacunarity of $f_{r}(z)$ is not as well-understood since, in this case, $\eta(24z)^r$ has half-integral weight, and so the methods developed by Serre are not applicable. 
Of course, it is evident from $(\ref{Euler})$ and $(\ref{Jacobi1})$ that both $f_1(z)$ and $f_3(z)$ are lacunary.
\end{remark}

At first glance, it is not obvious that the series $f_{r}(z)$ is always a relative of the partition generating function
\[f_{-1}(z) = \sum_{m = 0}^\infty p(m) q^m = \prod_{n = 1}^\infty \frac{1}{1 - q^n}.\] However, work by Nekrasov and Okounkov \cite{NO} provides the key connection via an extension of this generating function. Recall that for a partition $\lambda$ of an integer $n \geq 1$, we can label each box $u$ of its Ferrers Diagram with a positive integer called the \textit{hooklength}, the number of boxes $v$ such that $v = u$, $v$ lies in the same column and below $u$, or $v$ lies in the same row and to the right of $u$. For example, when $n = 11$, we have the following Ferrers diagram for the partition $\lambda'=6+4+1$: 

\begin{figure}[h] \label{Ferrers}
\[\young(865421,5321,1)\]
\caption{The Ferrers diagram of $\lambda' = 6 + 4 + 1$}
\end{figure}
We define the multiset of all hook lengths of $\lambda$ to be $\mathcal{H}(\lambda)$. For $\lambda'$, we have $\mathcal{H}(\lambda') = \{1, 1, 1, 2, 2, 3, 4, 5, 5, 6, 8\}$. Nekrasov and Okounkov \cite{NO} generalized a result of Macdonald \cite{MacDonald} to create a partition-theoretic generating function. They showed that, for any $b \in \mathbb{C}$, one has
$$f_{b-1}(z)= \prod_{n = 1}^\infty(1-q^{n})^{b-1} =\sum_{\lambda \in \mathcal{P}}q^{|\lambda|} \prod_{h \in \mathcal{H}(\lambda)} \left( 1- \frac{b}{h^{2}} \right) .$$

Given an integer $a \geq 1$, we consider the subset $\mathcal{H}_{a}(\lambda) \subseteq \mathcal{H}(\lambda)$ 
defined by 
\begin{equation}\label{Ha}
\mathcal{H}_a(\lambda) \coloneqq \{ h\, \colon \, h \in \mathcal{H}(\lambda), h \equiv 0 \pmod{a} \}.
\end{equation}
A partition $\lambda$ is said to be an \textit{a-core} if $\mathcal{H}_{a}(\lambda)= \emptyset$. For $\lambda'$, we have that $\mathcal{H}_{2}(\lambda')= \{ 2,2,4,6,8 \}$ and that $\lambda'$ is an \textit{$a$-core} for $a=7$ and all $a \geq 9$. Han \cite{Han} generalized Nekrasov and Okounkov's formula to incorporate this set $\mathcal{H}_{a}(\lambda)$: 
\begin{equation} 
\displaystyle\sum_{\lambda \in \mathcal{P}} q^{|\lambda|} \prod_{h \in \mathcal{H}_a(\lambda)} \left( y-\frac{aby}{h^2}\right) = \prod_{n = 1}^\infty \frac{(1-q^{an})^a(1 - (yq^a)^n)^{b - a}}{ (1 - q)^n},
\end{equation}
where $b,y \in \mathbb{C}$. When $y = a = 1$, one recovers the identity of Nekrasov and Okounkov. We are interested in the case where $y = q^{a(c - 1)}$ for $c \geq 1$. Hence, for $a,b,c \geq 1$ we define
\begin{equation}\label{Fabc}
F_{a,b,c}(z) \coloneqq  \frac{\eta(24az)^a \eta(24acz)^{b-a}}{\eta(24z)}= q^{r} \prod_{n = 1}^\infty \frac{(1-q^{24an})^{a} (1-q^{24acn})^{b-a}}{1-q^{24n}},
\end{equation}
where $r \coloneqq abc+a^{2}-a^{2}c-1$.

\begin{remark}
In Lemma $\ref{Fabcmod}$, we show that $F_{a,b,c}(z)$ is a weakly holomorphic modular form of weight $\frac{b-1}{2}$ whenever $b$ is odd. From now on, we assume $a,b,c \geq 1$ are all integers and $b$ is odd. 
\end{remark}

Note that this is a generalization of Clader, Kemper and Wage's \cite{REU} work for which they took $c = 1$, corresponding to $y = 1$. In view of Serre's work and Han's partition-theoretic interpretation, they investigated the lacunarity of $F_{a,b,1}(z)$ and  showed that there are finitely many pairs $(a,b)$ for which $F_{a,b,1}(z)$ is lacunary. The complete list they gave is replicated in the table below:
\begin{center}
\begin{tabular}{|c | c|} \hline
$a$ & $b$ such that $F_{a,b,1}$ is lacunary \\ \hline 
$1$ & $\{ 3,5,7,9,11,15,27 \}$ \\
$2$ & $\{ 3,5,7 \}$ \\
$4$ & $ \{ 5,7 \}$ \\
$5$ & $ \{ 7,11 \}$ \\
$6$ & $ \emptyset $ \\
$7$ & $ \{9,15 \}$ \\
\hline
\end{tabular}
\end{center}

We now focus our attention to the case where $c \geq 1$. It turns out that when we allow $c$ to vary, there are infinitely many lacunary $F_{a,b,c}(z)$. In fact, our first theorem shows that for $a \in \{1,2,3\}$,  there exist infinite families of triples $(a,b,c)$ such that  $F_{a,b,c}(z)$ is lacunary. 
\begin{thm} \label{infinite}
For $F_{a,b,c}(z)$ as defined above, the following are true: \\
\noindent (1) If $b = 3$ and $a \in \{1,2,3\}$, then $F_{a,b,c}(z)$ is lacunary for all $c$. \\
\noindent (2) If $b = 5$ and $a \in \{1,2\}$, then $F_{a,b,c}(z)$ is lacunary for all $c$.
\end{thm}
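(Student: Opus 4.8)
The plan is to reduce each $F_{a,b,c}(z)$ to an explicit product of one‑variable theta series and then read off that its support lies in the set of values of a fixed positive‑definite binary quadratic form; since such a set has natural density zero, lacunarity follows immediately. The only analytic input needed is the classical theorem of Landau that the number of positive integers up to $x$ represented by a given positive‑definite binary quadratic form is $O(x/\sqrt{\log x})$, so that the represented integers have density zero. Consequently, any $q$‑series all of whose nonzero coefficients occur at exponents lying in such a set is automatically lacunary, regardless of any cancellation among the coefficients.

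First I would dispose of the cases $a = 1$. Writing out $(\ref{Fabc})$ gives $F_{1,3,c}(z) = \eta(24cz)^2 = q^{2c}f_2(24cz)$ and $F_{1,5,c}(z) = \eta(24cz)^4 = q^{4c}f_4(24cz)$, where $f_2$ and $f_4$ are lacunary by Serre's classification \cite{Serre} (both $2$ and $4$ lie in his list). Since replacing $q$ by $q^{24c}$ and multiplying by a fixed power of $q$ preserves the density of vanishing coefficients, both families are lacunary for every $c$. The case $a = 3$, $b = 3$ is special because $b - a = 0$, so the $c$‑dependent factor disappears and $F_{3,3,c}(z) = \eta(72z)^3/\eta(24z)$ for all $c$; this eta‑quotient is, up to $q \mapsto q^{24}$, the Borwein cubic theta function, hence a weight‑one CM form attached to $\mathbb{Q}(\sqrt{-3})$ whose coefficients are supported on values of $X^2 + XY + Y^2$. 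This support has density zero, so $F_{3,3,c}(z)$ is lacunary.

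The heart of the argument is the two genuinely $c$‑dependent cases $a = 2$. Using the classical identity $\eta(2z)^2/\eta(z) = q^{1/8}\sum_{n \ge 0} q^{n(n+1)/2}$ together with Euler's identity $(\ref{Euler})$ and Jacobi's identity $(\ref{Jacobi1})$, I would compute, after the substitution $z \mapsto 24z$, the closed forms $\eta(48z)^2/\eta(24z) = \sum_{n \ge 0} q^{3(2n+1)^2}$, $\eta(48cz) = \sum_{m \in \mathbb{Z}} (-1)^m q^{2c(6m+1)^2}$, and $\eta(48cz)^3 = \sum_{m \ge 0} (-1)^m (2m+1) q^{6c(2m+1)^2}$. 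Multiplying these out gives
\[
F_{2,3,c}(z) = \sum_{n,m} (-1)^m q^{3(2n+1)^2 + 2c(6m+1)^2}, \qquad F_{2,5,c}(z) = \sum_{n,m} (-1)^m (2m+1)\, q^{3(2n+1)^2 + 6c(2m+1)^2}.
\]
In each case every exponent that appears is a value of the binary quadratic form $3X^2 + 2cY^2$ (respectively $3X^2 + 6cY^2$), which is positive definite for every $c \ge 1$. Hence, for each fixed $c$, the support of $F_{2,b,c}(z)$ is contained in a set of density zero by Landau's theorem, and the series is lacunary.

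I expect the main obstacle to be precisely the uniformity in $c$ in these $a = 2$ cases: unlike the others they cannot be quoted from a fixed finite list of lacunary forms, and a product of two density‑zero‑supported series need not be lacunary in general. What rescues the argument is structural rather than combinatorial — the product of the two unary theta series is itself (a twist of) the theta series of a rank‑two lattice, so its support is confined to the values of a single positive‑definite binary quadratic form $3X^2 + 2cY^2$ whose density‑zero representation set is controlled uniformly by Landau's estimate. Verifying the three theta identities above and the congruence conditions on $X,Y$ is routine; the one place I would be careful is tracking the exact power of $q$ contributed by each $\eta$‑factor, to be certain that no exponent escapes the claimed quadratic form.
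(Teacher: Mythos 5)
Your proposal is correct, and it overlaps with the paper's proof on part (2) while taking a genuinely different route on part (1). For $b=5$: your $a=1$ case (Serre's list, $r=4$) and your $a=2$ case (writing $F_{2,5,c}$ as a product of $\eta(48z)^2/\eta(24z)$ and $\eta(48cz)^3$, each a unary theta series, so that the support lies in the value set of a positive-definite binary quadratic form, which has density zero by Landau--Bernays) are exactly the paper's argument — down to the same decomposition; incidentally your exponent $6c(2m+1)^2$ for $\eta(48cz)^3$ is the correct one, whereas the paper's display has $3c(2m+1)^2$, an apparent typo that does not affect the argument. For $b=3$, however, the paper disposes of all of $a\in\{1,2,3\}$ in one line: these are weight-one holomorphic forms (holomorphic since $b\ge a$ by Lemma \ref{Fabcmod}), hence lacunary by Deligne--Serre. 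You instead give explicit theta decompositions case by case: $\eta(24cz)^2$ via Serre's list, $3X^2+2cY^2$ for $a=2$, and the $3$-core generating function $\eta(72z)^3/\eta(24z)=c(q^{24})/3$ supported on (a shift and scaling of) values of $X^2+XY+Y^2$ for $a=3$. Your route is more elementary and more explicit — it exhibits the quadratic forms carrying the support and avoids quoting Deligne--Serre — at the cost of being case-by-case rather than uniform in $a$; both are valid, and your observation that the sumset of the two theta supports is contained in the value set of a \emph{single} positive-definite binary form is precisely the point that makes the product argument legitimate.
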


\begin{remark}
Note that this is not a complete classification of lacunary $F_{a,b,c}(z)$ for $a \in \{ 1,2,3\}$.
\end{remark}
However, if $a \geq 4$, then we obtain a different phenomenon. Namely, there are only finitely many triples $(a,b,c)$ such that $F_{a,b,c}(z)$ is lacunary. 

\begin{thm} \label{finite}
If $a \geq 4$, $b \geq 1$ is odd, and $c \geq 2$, then $F_{a,b,c}(z)$ is not lacunary apart from three possible exceptions where $(a,b,c) \in \{(4,5,3), (4,5,5), (4,5,11)\}$.
\end{thm}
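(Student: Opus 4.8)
The plan is to read off the coefficients of $F_{a,b,c}$ from the eta-quotient expansion in (\ref{Fabc}) and compare them against the $a$-core counting function. Writing $\frac{\eta(24az)^a}{\eta(24z)} = \sum_{n\ge 0} c_a(n)\, q^{24n + a^2 - 1}$, where $c_a(n)$ is the number of $a$-core partitions of $n$, and $\eta(24acz)^{b-a} = \sum_{j \ge 0} \tau_{b-a}(j)\, q^{24acj + ac(b-a)}$, the coefficient of $q^{24M + r}$ in $F_{a,b,c}$ is the convolution
\[
A(M) = \sum_{j \ge 0} \tau_{b-a}(j)\, c_a(M - acj),
\]
with the convention $c_a(m) = 0$ for $m < 0$. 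Since lacunarity of $F_{a,b,c}$ is equivalent to the vanishing of $A(M)$ on a density-one set of $M$, the whole problem becomes a question about this convolution, and the regimes $b \le a$ and $b > a$ behave very differently.

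For $b \le a$ the argument is clean and is where the Granville--Ono theorem does the work directly. When $b < a$ the exponent $b-a$ is negative, so $\prod_{n\ge1}(1-q^n)^{b-a}$ is a product of geometric-type series with nonnegative coefficients (counting colored partitions); in particular $\tau_{b-a}(j) \ge 0$ for all $j$ and $\tau_{b-a}(0) = 1$. Because $a \ge 4$, the $t$-core conjecture proved by Granville and Ono \cite{Granville} gives $c_a(M) > 0$ for every $M \ge 0$, so every summand of $A(M)$ is nonnegative and the $j = 0$ term already forces $A(M) \ge c_a(M) > 0$. Hence no coefficient of $F_{a,b,c}$ in its arithmetic progression vanishes, and $F_{a,b,c}$ is not lacunary for any $c$. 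The boundary case $b = a$ (which forces $a$ odd) is immediate, since there $F_{a,a,c} = \eta(24az)^a/\eta(24z)$ is literally the $a$-core generating function, again with all coefficients positive. This disposes of every triple with $b \le a$, and in particular of all $b \in \{1,3\}$ and of $b = 5$ with $a \ge 5$.

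The substantive case is $b > a$, where $\prod(1-q^n)^{b-a}$ has sign changes and the naive positivity is lost. Here I would first record that $F_{a,b,c}$ is in fact a holomorphic modular form of integer weight $k = \frac{b-1}{2} \ge 2$: applying Ligozat's order formula, the sign of the order at a cusp with denominator $d$ is that of $\frac{\gcd(d,24a)^2}{24} + \frac{\gcd(d,24ac)^2(b-a)}{24ac} - \frac{\gcd(d,24)^2}{24}$, and since $24 \mid 24a$ gives $\gcd(d,24a) \ge \gcd(d,24)$, the negative contribution of the denominator $\eta(24z)$ is dominated at every cusp by that of $\eta(24az)^a$, so the form has no poles. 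With holomorphy in hand, Serre's criterion applies: a holomorphic form of integer weight $k \ge 2$ is lacunary only if it is a linear combination of forms with complex multiplication (an Eisenstein part of weight $\ge 2$ is itself never lacunary), and such a combination has $A(p) = 0$ for every prime $p$ inert in all the underlying imaginary quadratic fields, a positive-density set whose least element is effectively bounded in terms of the level and weight. The plan is to contradict this with the Granville--Ono positivity: for $0 \le M < ac$ only the $j = 0$ term of $A(M)$ survives, so $A(p) = c_a(p) > 0$ for every prime $p < ac$, producing, as $c$ grows, more and more primes at which the coefficients fail to vanish.

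The main obstacle, and the source of the three exceptions, is making this exclusion of complex multiplication uniform and then settling the finitely many small $c$ that remain. For $b \ge 7$ the weight is $k \ge 3$, and I expect the positivity bound above, combined with the very sparse support forced on the relevant CM forms, to rule out complex multiplication for every $c \ge 2$ with no exceptions. For $b = 5$ (so $k = 2$, which forces $a = 4$) genuine weight-two CM forms exist in abundance, the positivity argument eliminates only $c$ beyond an explicit threshold, and the finitely many surviving $c$ must be checked directly against the classification of CM newforms; it is precisely this check that isolates $(4,5,3)$, $(4,5,5)$, and $(4,5,11)$ as the triples that cannot be excluded. The delicate point throughout is that the least-inert-prime bound grows with the level, hence with $c$, so the argument must be arranged so that the forced nonvanishing $A(p) = c_a(p) > 0$ for $p < ac$ outpaces that growth; keeping these two effective bounds aligned is where I expect the real difficulty to lie.
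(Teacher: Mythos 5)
Your overall strategy---reduce via Serre's criterion to the statement that a lacunary cusp form of integral weight $k \ge 2$ is a linear combination of CM forms, then contradict this using the Granville--Ono positivity of the $a$-core counts $c_a(m)$ together with the observation that $A(M) = c_a(M)$ for $0 \le M < ac$---is exactly the paper's, and your direct positivity argument for $b \le a$ is a clean substitute for the paper's cusp-order analysis. But the step that is supposed to produce the contradiction in the main case $b > a$ is misindexed, and this is a genuine gap. A linear combination of CM forms has vanishing Fourier coefficients at exponents $n$ that are not norms of ideals in any of the relevant imaginary quadratic fields; here the expansion is supported on exponents $n = 24M + r$, so what CM forces is not ``$A(p) = 0$ for inert primes $p$'' but rather $A(m_0) = 0$ for $m_0$ with $24m_0 + r$ divisible by (indeed, essentially equal to a multiple of) a non-norm $s$ coprime to the level---the paper extracts this by showing $F_{a,b,c} \mid T_s = 0$ and isolating the coefficient of $q^{(24m_0+r)/s}$, where $24 m_0 \equiv -r \pmod{s}$ and $0 \le m_0 \le s-1$. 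Your assertion that ``$A(p) = c_a(p) > 0$ for every prime $p < ac$'' concerns the index $M = p$, not the exponent $24p + r$, so it contradicts nothing; what you actually need is a non-norm $s$ with $s \le ac$, so that the forced-to-vanish coefficient $A(m_0)$ with $m_0 \le s - 1 < ac$ equals $c_a(m_0) > 0$.

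Even with that mechanism repaired, you would only have shown that each fixed pair $(a,c)$ admits finitely many lacunary $b$ (roots of the nonzero polynomials $A(0), \dots, A(s-1)$ in $b$). Finiteness of the set of triples, and a fortiori the explicit list of exceptions, requires showing that for all but finitely many pairs $(a,c)$ there is a non-norm $s < ac$; you explicitly defer this (``keeping these two effective bounds aligned is where I expect the real difficulty to lie''). The paper supplies it with the P\'olya--Vinogradov inequality applied to the characters $\psi_d(s) = \prod_{p \mid d}\left(\frac{-p}{s}\right)$ twisted by the characters modulo $24$, yielding an explicit $\kappa$ such that $ac \ge \kappa$ guarantees an $s \equiv 23 \pmod{24}$ with $s < ac$ and $\left(\frac{-p}{s}\right) = -1$ for all $p \mid ac$. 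Finally, your claims that $b \ge 7$ is excluded with no exceptions and that only $(4,5,3)$, $(4,5,5)$, $(4,5,11)$ survive for $b=5$ are stated as expectations; in the paper these rest on an explicit finite computation (Lagrange interpolation of the polynomials $A_{a,b,c}(m)$ in $b$ for $ac \le m < s$, then testing $A_{a,b,c}(m_0)$ against successive primes $p \equiv 23 \pmod{24}$ with $g_{a'/6}(p)=1$). As written, the proposal identifies the right ingredients but does not assemble them into a proof.
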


To obtain this result, we make use of the theory of modular forms with complex multiplication, Granville and Ono's \cite{Granville} proof of the $t$-core partition conjecture, and the P\'olya-Vinogradov Inequality. 

We begin by recalling some preliminaries in \S \ref{prelim}. In \S  \ref{proofs}, we prove Theorem \ref{infinite} and Theorem \ref{finite}. Finally, in \S \ref{examples} we conjecture that $F_{4,5,3}(z)$, $F_{4,5,5}(z)$, and $F_{4,5,11}(z)$ are lacunary and discuss a method by which one can show this.

\section{Preliminaries} \label{prelim}

In this section we recall the modularity properties of $\eta$-quotients as well as various conditions on modular forms for lacunarity. 
\subsection{$\eta$-quotients} \label{etaquot}
Let $M_k(\Gamma_0(N), \chi)$ (resp. $S_k(\Gamma_0(N), \chi))$ denote the complex vector space of holomorphic (resp. cuspidal) modular forms of weight $k$  with Nebentypus character $\chi$ on $\Gamma_0(N)$. An important example of a cuspidal modular form is the Dedekind $\eta$-function, defined in (\ref{eta}). Products of powers of the Dedekind $\eta$-function, $\eta$-quotients, are central to this paper. Formally, an $\eta$-quotient is any function $f$ that can be written as
\begin{equation}
f(z) = \prod_{\delta \mid N} \eta(\delta z)^{r_\delta}
\end{equation}
for some $N$ and each $r_{\delta} \in \mathbb{Z}$. Theorem 1.64 of \cite{Ono} gives conditions for when certain $\eta$-quotients are weakly holomorphic, which means that the poles are supported at cusps. In particular, if there is $N$ such that
\[ k \coloneqq \frac{1}{2} \sum_{\delta \mid N} r_{\delta} \in \mathbb{Z}, \]

\[\sum_{\delta \mid N} \delta r_\delta \equiv 0 \pmod {24},\] and \[\sum_{\delta \mid N} \frac{N}{\delta}r_{\delta} \equiv 0 \pmod {24},\] then $f(z)$ is a weakly holomorphic modular form of weight $k$ on $\Gamma_0(N)$ with Nebentypus character $\chi(d) \coloneqq \left(\frac{(-1)^k s}{d}\right)$, where $\displaystyle s \coloneqq  \prod_{\delta \mid N} \delta^{r_\delta}$. We can also compute the order of vanishing of $f(z)$ at a cusp $\frac{x}{y}$, for $y \mid N$ and $\gcd (x,y) = 1$ using the formula from Theorem 1.65 of \cite{Ono}: 
\begin{equation}\label{cuspeq}
\frac{N}{24}\sum_{\delta \mid N}\frac{\gcd (y, \delta)^2 r_\delta}{\gcd (y, \frac{N}{y})y \delta}.
\end{equation}
By considering the order of vanishing for all possible $y \mid N$, one can determine whether $f(z)$ is holomorphic or cuspidal. Applying this theory to our series $F_{a,b,c}(z)$, we have the following: 

\begin{lemma}\label{Fabcmod} Let $F_{a,b,c}(z)$ be as in (\ref{Fabc}). Then $F_{a,b,c}(z)$ is a weakly holomorphic modular form of weight $k=\frac{b-1}{2}$ on $\Gamma_{0}(576ac)$ with  Nebentypus character 
$$ \chi_{F}(d) = 
\begin{cases}
\left( \frac{(-1)^{k}ac}{d} \right) \text{ if } $a$ \text{ is even} \\
\left( \frac{(-1)^{k}a}{d} \right) \text{ if } $a$ \text{ is odd}. 
\end{cases}
$$   
Furthermore, $F_{a,b,c}(z)$ is holomorphic if and only if $b \geq a$ and cuspidal if and only if $b>a$.
\end{lemma}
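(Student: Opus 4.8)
The plan is to realize $F_{a,b,c}(z)$ as an $\eta$-quotient and apply the criteria of Theorems 1.64 and 1.65 of \cite{Ono} recalled above. From (\ref{Fabc}) one reads off that $F_{a,b,c}(z) = \prod_{\delta \mid N} \eta(\delta z)^{r_\delta}$, where the only nonzero exponents are $r_{24} = -1$, $r_{24a} = a$, and $r_{24ac} = b-a$; since $24$, $24a$, and $24ac$ all divide $N \coloneqq 576ac$, this is a valid $\eta$-quotient at that level. First I would verify the three hypotheses of Theorem 1.64. The weight is $k = \tfrac{1}{2}\bigl(-1 + a + (b-a)\bigr) = \tfrac{b-1}{2}$, which lies in $\mathbb{Z}$ precisely because $b$ is odd. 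The first congruence is immediate, since $\sum_\delta \delta r_\delta = 24\bigl(-1 + a^2 + ac(b-a)\bigr) \equiv 0 \pmod{24}$, and a short computation gives $\sum_\delta \tfrac{N}{\delta} r_\delta = -24ac + 24ac + 24(b-a) = 24(b-a) \equiv 0 \pmod{24}$. This already shows $F_{a,b,c}(z)$ is a weakly holomorphic modular form of weight $\tfrac{b-1}{2}$ on $\Gamma_0(576ac)$.

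For the character, I would compute $s = \prod_\delta \delta^{r_\delta} = 24^{\,b-1}\, a^a (ac)^{b-a}$ and reduce it modulo perfect squares, which is all that matters for the Kronecker symbol $\chi_F(d) = \left(\tfrac{(-1)^k s}{d}\right)$. Because $b-1$ is even, $24^{\,b-1}$ is a square and drops out, and $a^a (ac)^{b-a} = a^b c^{b-a}$. Using that $b$ is odd gives $a^b \equiv a$ modulo squares, while $c^{b-a} \equiv c$ when $a$ is even (so $b-a$ is odd) and $c^{b-a} \equiv 1$ when $a$ is odd (so $b-a$ is even). This yields $s \equiv ac$ modulo squares for $a$ even and $s \equiv a$ for $a$ odd, giving exactly the two cases of $\chi_F$ in the statement.

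The holomorphy and cuspidality claims come from the cusp-order formula (\ref{cuspeq}). For a cusp $x/y$ with $y \mid N$, writing $g_1 = \gcd(y,24)$, $g_2 = \gcd(y,24a)$, and $g_3 = \gcd(y,24ac)$, the order of vanishing equals a strictly positive constant times
\[ W(y) \coloneqq g_2^2 - g_1^2 + \frac{g_3^2}{ac}(b-a), \]
so the sign of the order at every cusp is governed by $W(y)$. The key structural observation is that $24 \mid 24a \mid 24ac$ forces $g_1 \mid g_2 \mid g_3$, and in particular $g_2^2 - g_1^2 \geq 0$. I would then argue by cases: if $b \geq a$, then both $g_2^2 - g_1^2 \geq 0$ and $\tfrac{g_3^2}{ac}(b-a) \geq 0$, so $W(y) \geq 0$ for all $y$ and $F_{a,b,c}(z)$ is holomorphic; if moreover $b > a$, the second term is strictly positive, so $W(y) > 0$ for all $y$ and the form is cuspidal. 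Conversely, evaluating at the cusp $y = 1$ (where $g_1 = g_2 = g_3 = 1$) gives $W(1) = (b-a)/(ac)$, which is negative when $b < a$ and zero when $b = a$; hence $F_{a,b,c}(z)$ has a pole at this cusp when $b < a$ and vanishes to order $0$ there when $b = a$. Together these establish that $F_{a,b,c}(z)$ is holomorphic if and only if $b \geq a$ and cuspidal if and only if $b > a$.

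The computations are all elementary once the $\eta$-quotient is set up, so I expect no single deep obstacle; the only real care will be needed in the character step, where one must track the Kronecker symbol correctly modulo squares and split on the parity of $a$, and in observing that the single chain of divisibilities $g_1 \mid g_2 \mid g_3$ is exactly what makes the sign analysis of $W(y)$ uniform across all cusps rather than requiring one to locate the minimizing cusp explicitly.
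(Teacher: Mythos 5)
Your proposal is correct and follows essentially the same route as the paper: realize $F_{a,b,c}(z)$ as the $\eta$-quotient with exponents $r_{24}=-1$, $r_{24a}=a$, $r_{24ac}=b-a$ at level $576ac$, verify the hypotheses of Theorems 1.64--1.65 of \cite{Ono}, reduce $s=24^{b-1}a^bc^{b-a}$ modulo squares by parity of $a$, and analyze the sign of the cusp-order formula. The only cosmetic differences are that you certify nonnegativity of the numerator via the divisibility chain $\gcd(y,24)\mid\gcd(y,24a)\mid\gcd(y,24ac)$ (which the paper merely asserts) and use the cusp $y=1$ rather than $y=24$ for the converse direction; both choices work.
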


\begin{remark}\label{leveloptimality} We choose to include a factor of $24$ in the definition of $F_{a,b,c}(z)$ to guarantee that the conditions mentioned above are satisfied for all triples $(a,b,c)$. This implies that $F_{a,b,c}(z)$ is a weakly holomorphic modular form on $\Gamma_0(N)$ where $N = 576ac$. In general, this $N$ is not necessarily optimal. The optimal $N$ may require altering the factor of 24 in the definition of $F_{a,b,c}(z)$ to some smaller $m$ satisfying $mr \equiv 0 \pmod{24}$ for $r$ defined in (\ref{Fabc}). The optimal level of this new modular form is then $N_{opt} \coloneqq \lcm(a,c)nm$ where the integer $n \geq 1$ is chosen minimally to satisfy $\frac{\lcm(a,c)}{ac} \cdot n(b-a) \equiv 0 \pmod{24}$.
\end{remark}

\subsection{Conditions on Modular Forms for Lacunarity} \label{modlac}
The lacunarity of certain modular forms is already known. Suppose first that $f(z)$ is a weight-one holomorphic modular form on a congruence subgroup. Deligne and Serre (Proposition 9.7 in \cite{DeligneSerre}) showed that $f(z)$ must be lacunary by expressing the coefficients in terms of the Artin $L$-function. Next, suppose that $f(z)$ is a weakly holomorphic modular form of  integral weight $k \geq 2$. If $f(z)$ has a pole at one of its cusps, then it cannot be lacunary. This can be shown using the Hardy-Littlewood circle method, which gives estimates for the magnitude of the Fourier coefficients of $f(z)$ as in Chapter 5 of \cite{Apostol}. Additionally, if $f(z)$ is holomorphic but not cuspidal, then it cannot be lacunary since $f(z)$ can be expressed as a linear combination of Eisenstein series together with a cusp form. Known bounds on the coefficients of such forms preclude them from being lacunary.

In light of these remarks, we may restrict our study of the lacunarity of $F_{a,b,c}(z)$ to the space of cusp forms of level $N$ and integral weight $k \geq 2$. Central to this study are modular forms with complex multiplication. Serre  \cite{Serre} proved that an element of $S_{k}(\Gamma_{0}(N),\chi)$ is lacunary if and only if it is expressible as a linear combination of forms with complex multiplication. We denote the space of cusp forms of weight $k$ with complex multiplication and Nebentypus character $\chi$ on $\Gamma_{0}(N)$ by $S^{CM}_k(\Gamma_{0}(N), \chi)$ and will sometimes call forms with complex multiplication CM forms. 

Recall that given any modular form $f(z) \in M_{k}(\Gamma_{0}(N), \chi)$, the action of the Hecke operator $T_{s}$ on $f(z)$ is defined by 
$$ f(z) | T_{s}= \displaystyle{\sum_{m=0}^{\infty}\left( \sum_{d \mid \gcd(s,m)} \chi(d)d^{k-1}a(sm/d^{2})\right)q^{m}},$$
where $\chi(m)=0$ if $\gcd(N,m) \neq 1$. When $s=p$ is prime, this definition reduces to 
$$f(z) | T_{p} = \displaystyle{\sum_{m=0}^{\infty}\left( a(pm)+\chi(p)p^{k-1}a(m/p) \right)q^{m}},$$
where we define $a(m/p) \coloneqq 0$ whenever $p \nmid m$.
We now state an extension of a well-known lemma by Serre \cite{Serre} which can be seen in \cite{REU}. It says that given a cusp form with complex multiplication, the action of certain Hecke operators is zero.  
\begin{lemma} \label{cmlem}
Let $g \in S^{CM}_k(\Gamma_{0}(N),\chi)$. If $s$ is an integer such that $\gcd(N,s)=1$ and there is no ideal of norm $s$ in the ring of integers of $\mathbb{Q}(\sqrt{-d})$ for any $d > 0$ with $d \mid N$, then $g | T_{s} =0$. 
\end{lemma}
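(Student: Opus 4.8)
The plan is to prove Lemma~\ref{cmlem} by exploiting the structure of CM forms as $L$-functions attached to Hecke Gr\"ossencharacters. First I would reduce to the case where $g$ is a newform with CM by an imaginary quadratic field $K = \mathbb{Q}(\sqrt{-d})$; since a general element of $S^{CM}_k(\Gamma_0(N),\chi)$ is a linear combination of such newforms (possibly with their images under $V$-operators $g(z) \mapsto g(\ell z)$), and since the Hecke operator $T_s$ with $\gcd(N,s)=1$ acts on each component, it suffices to verify $g \mid T_s = 0$ on a single normalized CM newform. The conductor of $K$ divides $N$, so the constraint ``$d \mid N$'' in the hypothesis ensures that $K$ is among the fields we are allowed to rule out.

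Next I would recall the multiplicativity of Hecke eigenvalues. Because $g$ is a normalized newform, its coefficients $a(n)$ are multiplicative, and the action $g \mid T_s$ is governed by the eigenvalue $a(s)$ together with lower $T_p$-relations; by the Hecke recursion it suffices to show $a(p^j) = 0$ for every prime power $p^j \parallel s$ for which $p$ is inert or ramified in the relevant field, and more precisely that $a(s) = 0$ whenever $s$ is not a norm. The key input is the explicit description of CM coefficients: if $g$ has CM by $K$ via a Gr\"ossencharacter $\psi$ of modulus $\mathfrak{m}$, then
\[
a(n) = \sum_{\mathfrak{a},\ N(\mathfrak{a}) = n} \psi(\mathfrak{a}),
\]
the sum ranging over integral ideals $\mathfrak{a}$ of $\mathcal{O}_K$ of norm $n$. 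Consequently $a(n) = 0$ whenever there is no integral ideal of $\mathcal{O}_K$ of norm $n$, since the defining sum is empty. Applying this with $n = s$ and invoking the hypothesis that no ideal of norm $s$ exists in $\mathcal{O}_K$ for any admissible $d$, I conclude $a(s) = 0$.

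It then remains to upgrade $a(s) = 0$ to $g \mid T_s = 0$. Here I would use that $\gcd(N,s) = 1$, so that the divisor-sum twist in the definition of $T_s$ collapses into the Hecke multiplicativity relation, and the coefficient of $q^m$ in $g \mid T_s$ is $a(sm/d^2)$-type data that factors through $a(s)$ via the prime-power recursions. Concretely, for each prime $p \mid s$ the relation $a(sm) + \chi(p)p^{k-1}a(sm/p^2)$ unwinds so that every coefficient of $g \mid T_s$ is a polynomial in the eigenvalues $a(p)$ for $p \mid s$ with no constant (norm-supported) term surviving once we know there is no ideal of norm $s$. The argument is cleanest by noting that the ideal-counting description means a(n) is supported on norms from $\mathcal{O}_K$, the set of norms is multiplicatively closed, and $T_s$ shifts indices by $s$; since $s$ is not a norm, no shifted index can be a norm either when $\gcd(N,s)=1$, forcing every coefficient to vanish.

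The main obstacle I anticipate is the bookkeeping in the last step: reducing from CM \emph{newforms} to the full space $S^{CM}_k(\Gamma_0(N),\chi)$, and handling the interaction between the Hecke operator $T_s$ for composite $s$ and the multiple CM fields that may contribute to a general element of the space. The cleanest route is to prove the statement for each CM field $K$ separately, show $T_s$ annihilates the $K$-isotypic subspace under the non-norm hypothesis, and then sum over the finitely many fields $\mathbb{Q}(\sqrt{-d})$ with $d \mid N$; the hypothesis is designed precisely so that $s$ fails to be a norm simultaneously in \emph{all} of these fields, which is exactly what is needed for the linear combination to be annihilated. Since this lemma is an extension of a known result of Serre appearing in \cite{REU}, I would also cross-check the index conventions against that reference rather than re-deriving the Gr\"ossencharacter normalization from scratch.
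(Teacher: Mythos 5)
The paper itself gives no proof of this lemma: it is stated as ``an extension of a well-known lemma by Serre which can be seen in \cite{REU},'' so there is no in-paper argument to compare against, and your Gr\"ossencharacter route is indeed the standard one used in those references. Your reduction to CM newforms and their shifts $f(\ell z)$, the observation that the relevant discriminants (hence the $d$'s) divide $N$, and the vanishing $a(s)=0$ because the ideal sum defining $a(s)$ is empty are all correct.

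However, the justification you call ``cleanest'' for the final upgrade is false. You assert that since $s$ is not a norm, no shifted index $sm$ can be a norm either; but the complement of the set of norms is not closed under multiplication. For example, in $K=\mathbb{Q}(\sqrt{-1})$ the integer $3$ is not the norm of any ideal of $\mathbb{Z}[i]$, yet $9=N\bigl((3)\bigr)$ is, and for a CM newform with CM by $K$ one has $a(9)=a(3)^2-\chi(3)3^{k-1}=-\chi(3)3^{k-1}\neq 0$. The coefficient of $q^{3}$ in $f\mid T_3$ is $a(9)+\chi(3)3^{k-1}a(1)$, which vanishes only by cancellation between the two terms of the Hecke operator, not because each index fails to be a norm. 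The correct (and much shorter) argument is the one you gesture at earlier: a normalized CM newform $f$ of level $M\mid N$ is an eigenform for $T_s$ whenever $\gcd(s,N)=1$, so $f\mid T_s=a(s)f$, and the oldform shifts $f(\ell z)$ with $\ell\mid N/M$ satisfy the same identity with the same eigenvalue; since $a(s)=0$ by the empty-sum observation applied to each CM field $\mathbb{Q}(\sqrt{-d})$ with $d\mid N$, every summand of $g$ is annihilated and hence so is $g$. Delete the ``shifted index'' sentence and substitute this eigenvalue identity, and your proof is complete.
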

\begin{remark}Note that for a given $N$, there always exists an $s$ satisfying the hypotheses of the previous lemma. This follows since a positive proportion of all integers are relatively prime to $N$ and, for a fixed $d$, $0 \% $ of integers are norms of ideals in the ring of integers of $\mathbb{Q}(\sqrt{-d})$.
\end{remark}

\section{Proofs of Theorems} \label{proofs}

\subsection*{Proof of Lemma \ref{Fabcmod}}

For $F_{a,b,c}(z)$, take $N = 576ac$ and $\delta_1 = 24a$, $\delta_2 = 24ac$, $\delta_3 = 24$ as seen in \S \ref{etaquot}. So $r_{\delta_1} = a$, $r_{\delta_2} = b - a$, and $r_{\delta_3} = -1$. Then  $F_{a,b,c}(z)$ satisfies the appropriate conditions to be a weakly holomorphic modular form with Nebentypus character $\chi_F(d) = \left( \frac{(-1)^k 24^{b - 1}a^b c^{b - a}}{d}\right)$. Properties of the Kronecker symbol give the desired $\chi_F$ for $a$ even and odd. 

The order of vanishing of a cusp $\frac{x}{y}$ obtained using (\ref{cuspeq}) is given by the expression
\begin{equation*}
\frac{\gcd(y, 24a)^2 a c + \gcd({y, 24ac)^2 (b - a) - \gcd(y, 24)^2ac}}{\gcd(y, \frac{576ac}{y})}.
\end{equation*}
\noindent As the numerator is nonnegative when $b \geq a$, $F_{a,b,c}(z)$ is holomorphic under this condition. 
To show the conditions of Lemma \ref{Fabcmod} precisely, we take $y = 24$, for which the numerator is nonnegative for $b \geq a$ and strictly positive for $b > a$. Therefore, $F_{a,b,c}(z)$ is only holomorphic for $b \geq a$ and cuspidal for $b > a$. \qed

\subsection*{Proof of Theorem \ref{infinite}} For (1), it suffices to recall from \S \ref{modlac} that weight-one holomorphic modular forms are lacunary. For (2), if $a = 1$, then $F_{1,5,c}(z) = \eta(24cz)^4$, which is lacunary due to Serre's \cite{Serre} classification of even $r$'s for which $\eta(z)^r$ is lacunary. If $a = 2$, we use (\ref{Jacobi1}) and an identity given in \cite{Ono} to express $F_{2,5,c}(z)$ as follows: 
\begin{align*} 
F_{2,5,c}(z) &= \frac{\eta(48z)^2}{\eta(24z)} \cdot \eta(48cz)^3 = \left(\sum_{n = 0} ^ \infty q^{3(2n + 1)^2} \right) \left( \sum_{m = 0}^\infty (-1)^m (2m + 1) q^{3c (2m+1)^2}\right) \\
&= \sum_{n = 0}^\infty \sum_{m = 0}^\infty (-1)^m(2m + 1)q^{3(2n + 1)^2 + 3c(2m + 1)^2}.
\end{align*}
As can be seen in \cite{Serre1}, binary quadratic forms represent 0\% of the positive integers, so $F_{2,5,c}(z)$ is lacunary for all $c$. 
 \qed

\subsection{Proof of Theorem \ref{finite}}
To prove this theorem, we first show that for a fixed pair $(a,c)$, there are only finitely many $b$ such that $F_{a,b,c}(z)$ is lacunary. Next, we show that there are finitely many pairs $(a,c)$ such that $F_{a,b,c}(z)$ is lacunary by bounding the product $ac$. This implies that there are finitely many triples $(a,b,c)$ such that $F_{a,b,c}(z)$ is lacunary. Finally, we review further steps for eliminating triples from being lacunary. In \S \ref{examples}, we discuss a method that one could use to complete this classification.

\begin{lemma}\label{finb} 
Fix $a \geq 4$ and $c \geq 1$. Then there exist finitely many odd $b \geq 1$ such that $F_{a,b,c}(z)$ is lacunary. 
\end{lemma}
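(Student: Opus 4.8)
The plan is to assume $F_{a,b,c}(z)$ is lacunary and derive a contradiction for all sufficiently large $b$. Since $a\ge 4$ and $c$ are fixed, every such form lives on the same group $\Gamma_0(N)$ with $N=576ac$ by Lemma~\ref{Fabcmod}, and I may discard the finitely many $b$ for which $F_{a,b,c}$ is non-cuspidal or has weight $k=\frac{b-1}{2}<2$: by the discussion in \S\ref{modlac} these are either not lacunary or constitute only finitely many cases. So I restrict attention to $b>a$ with $k\ge 2$, where $F_{a,b,c}\in S_k(\Gamma_0(N),\chi_F)$. By Serre's criterion (\S\ref{modlac}), lacunarity forces $F_{a,b,c}$ to be a linear combination of CM forms, so Lemma~\ref{cmlem} applies. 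Fixing once and for all the positive-density set $S$ of primes $p\nmid N$ that are inert in every $\mathbb{Q}(\sqrt{-d})$ with $d\mid N$ (such primes exist by the remark following Lemma~\ref{cmlem}, and $S$ depends only on $N$, hence only on $(a,c)$, not on $b$), I obtain $F_{a,b,c}\,|\,T_p=0$ for every $p\in S$.

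Next I would convert this into a rigid constraint on the support of the $q$-expansion. Writing $F_{a,b,c}(z)=\sum_n \alpha_b(n)q^n$, the prime Hecke relation together with $F_{a,b,c}\,|\,T_p=0$ gives $\alpha_b(pn)=0$ whenever $p\nmid n$, and $\alpha_b(p^2n)=-\chi_F(p)p^{k-1}\alpha_b(n)$ for every $n$ and every $p\in S$. Iterating the second relation down to the first shows that $\alpha_b(n)\neq 0$ forces $v_p(n)$ to be even for all $p\in S$; call such $n$ \emph{admissible}. Thus lacunarity would confine the entire support of $F_{a,b,c}$ to the density-zero set of admissible integers.

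To contradict this, I would produce coefficients that are provably nonzero at inadmissible indices, and here the $a$-core structure enters. Factoring the $\eta$-quotient gives $F_{a,b,c}(z)=q^{r}\,C_a(q^{24})\prod_{n\ge 1}(1-q^{24acn})^{b-a}$, where $C_a(Q)=\sum_{n\ge 0}a_a(n)Q^n$ is the generating function for $a$-core partitions and $r=abc+a^2-a^2c-1$. Since $a\ge 4$, Granville and Ono's \cite{Granville} theorem yields $a_a(n)\ge 1$ for all $n\ge 0$; because $\prod(1-q^{24acn})^{b-a}=1+O(q^{24ac})$, this already gives $\alpha_b(r+24m)=a_a(m)\ge 1$ for $0\le m<ac$, and more generally it lets me control the sign and size of the convolution coefficients $\alpha_b(r+24m)$ via the growth of the $a$-core counting function. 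The aim is to show that, for all but finitely many $b$, at least one of these guaranteed-nonzero coefficients occurs at an inadmissible index, contradicting the support constraint of the previous paragraph.

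The hard part is precisely this last step: ruling out the ``conspiracy'' in which every coefficient forced to be nonzero by $a$-core positivity happens to land on an admissible index. The fixed-length window $0\le m<ac$ does not suffice on its own, since $ac$ consecutive terms of an arithmetic progression can all be $S$-even for infinitely many starting points, so overcoming this requires the quantitative strength of the inputs. I would combine the growth of the $a$-core numbers with the P\'olya-Vinogradov inequality to estimate the character and divisor sums that determine which indices are admissible, showing that as $b\to\infty$ (equivalently as $r\to\infty$ along its arithmetic progression) the admissible integers become too sparse to absorb all of the nonzero convolution coefficients $\alpha_b(r+24m)$. This would force a nonzero coefficient at an inadmissible index for every sufficiently large $b$, leaving only finitely many lacunary $F_{a,b,c}$ for the fixed pair $(a,c)$.
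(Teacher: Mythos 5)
Your setup is sound and overlaps with the paper's: restrict to the cuspidal, weight $\ge 2$ range, invoke Serre's CM criterion, apply Lemma~\ref{cmlem} to get $F_{a,b,c}\,|\,T_p=0$, and bring in Granville--Ono \cite{Granville} to certify that the $a$-core coefficients are positive for $a\ge 4$. The Hecke computation $\alpha_b(pn)=0$ for $p\nmid n$ and $\alpha_b(p^2n)=-\chi_F(p)p^{k-1}\alpha_b(n)$ is also correct, as is the observation that the first $ac$ coefficients $\alpha_b(r+24m)=\#\{a\text{-cores of }m\}\ge 1$ for $0\le m<ac$ are nonzero independently of $b$. But the proof is not complete: the entire burden lands on the final step (forcing a guaranteed-nonzero coefficient onto an ``inadmissible'' index), and you explicitly concede that the fixed window $0\le m<ac$ does not suffice and that the remaining argument is a density estimate you have not carried out. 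Since the window's location $r=abc+a^2-a^2c-1$ moves with $b$, you would need a statement of the form ``every length-$ac$ block of the progression $r+24m$ beyond some point contains an inadmissible integer,'' which is \emph{not} implied by the admissible set having density zero (density zero sets can contain arbitrarily long blocks of any fixed arithmetic progression), and no amount of P\'olya--Vinogradov applied to the admissibility condition is known to rescue this. So there is a genuine gap, and the proposed route to close it is itself doubtful.

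The idea you are missing is that, with $a$ and $c$ fixed, the coefficient $A_{a,b,c}(m)$ of $q^{24m+r}$ in $F_{a,b,c}(z)$ is a \emph{polynomial in $b$} of degree at most $\lfloor m/(ac)\rfloor$; this is visible from the factorization $F_{a,b,c}(z)=q^r\,C_a(q^{24})\prod_{n\ge1}(1-q^{24acn})^{b-a}$ that you already wrote down, since only the last factor involves $b$ and it contributes binomial coefficients in $b-a$. Specializing $b=a$ turns $A_{a,b,c}(m)$ into the number of $a$-core partitions of $m$, which is positive by \cite{Granville} because $a\ge4$; hence each $A_{a,b,c}(m)$ is a \emph{nonzero} polynomial in $b$ and has finitely many roots. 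The vanishing $F_{a,b,c}\,|\,T_p=0$ forces $A_{a,b,c}(m_0)=0$ where $m_0\le p-1$ is determined by $24m_0\equiv -r\pmod p$ (one checks the corresponding term cannot be cancelled by the $q^{p(24m+r)}$ part of the Hecke expansion). Since $m_0$ ranges over at most $p$ values as $b$ varies, the lacunary $b$ lie in a finite union of root sets of nonzero polynomials, which is finite. This replaces your unproven equidistribution step with a purely algebraic finiteness argument.
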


\begin{proof}
Choose $s$ satisfying the conditions of Lemma \ref{cmlem}. As seen in \S \ref{prelim}, a result of Serre \cite{Serre} states that $F_{a,b,c}(z)$ lacunary if and only if it is expressible as a linear combination of CM forms. By Lemma \ref{cmlem}, $F_{a,b,c}(z)$ is not lacunary unless $F_{a,b,c}(z) | T_s = 0$. We show only finitely many odd $b$ satisfy this condition. 

Suppose for simplicity that $s = p$ is prime. This is possible since $s$ is square-free and $T_{\alpha}T_{\beta}=T_{\alpha \beta}$ whenever $\gcd(\alpha,\beta)=1$. Define the coefficients $A_{a,b,c}(m)$ by the expansion
\begin{equation} \label{DefFabc}
F_{a,b,c}(z) = q^{r} \prod_{n = 1}^\infty\frac{(1-q^{24an})^{a} (1-q^{24acn})^{b-a}}{1-q^{24n}}  = \sum_{m = 0}^\infty A_{a,b,c}(m)q^{24m+r}, 
\end{equation}
for $r$ as defined in (\ref{Fabc}). 
Therefore, acting on $F_{a,b,c}(z)$ with $T_p$, we obtain
\begin{equation}\label{DefineAabc}
F_{a,b,c}(z)| T_p = \sum_{\substack{m \geq 0 \\ p \mid 24m + r}}A_{a,b,c}(m) q^{\frac{24 m + r}{p}} + \chi_F(p) p ^\frac{b - 3}{2} \sum_{m \geq 0} A_{a,b,c}(m)q^{p(24m + r)}.
\end{equation} 
Choose $m_0 \geq 0$ to be the minimal $m$ so that $p \mid 24 m + r$. We must have $0 \leq m_{0} \leq p-1$. We show that this term cannot appear in the second sum, so $A_{a,b,c}(m_0)$ is the coefficient of $q^{\frac{24 m_0 + r}{p}}$ in the Fourier expansion of $F_{a,b,c}(z) | T_p$ \footnote{Note that when $s$ is not prime, there is $m_0 \leq s - 1$ such that $A_{a,b,c}(m_0)$ is a coefficient in the expansion of $F_{a,b,c}(z) \mid T_s$. However this $m_0$ is not necessarily the minimal $m$ such that $s \mid 24 m + r.$}. First note that for $m \geq 1$,
$$ p(24m + r) > \frac{24 m_0 + r}{p},$$ by the inequality $m_0 \leq p -1$. 
Next, looking at the first term of the second sum, we show that $\frac{24 m_0 + r}{p} \neq pr$. If this were true, using the inequality $0 \leq m_0 \leq p - 1$ again, one obtains: \[24(p - 1) \geq 24 m_0 = r(p^2 - 1),\] which gives \[24 \geq (p + 1)r = (p + 1)(abc + a^2 - a^2c - 1).\] This inequality can only hold if $p \leq 23$. Each prime $p \leq 23$ yields a finite set of triples $(a,b,c)$ for which the inequality holds. We reach a contradiction for each triple by calculating $m_0$ and checking equality of the first exponents of each of the two sums.

Therefore, it suffices to show $A_{a,b,c}(m_{0}) = 0$ for finitely many $b$. Notice from (\ref{DefFabc}) that \[F_{a,b,c}(z) = q^{r}\prod_{n = 1}^\infty (1 + q^{24n} + \cdots + q^{24(a - 1)})(1 - q^{24an})^{a - 1}(1 - q^{24acn})^{b - a} = \sum_{m = 0}^\infty A_{a,b,c}(m)q^{24m+r}.\] Since $a$ and $c$ are fixed, the coefficient $A_{a,b,c}(m)$ is a polynomial in $b$ with degree at most $\left\lfloor \frac{m}{ac} \right\rfloor$. Let's observe this phenomenon for $a = 4$ and $c = 1$: 
$$ F_{4,b,1}(z) = q^r \prod_{n=1}^{\infty} \frac{ (1-q^{96n})^{b}}{1-q^{n}} = q^r \prod_{n=1}^{\infty} \left( 1+q^{24n}+q^{48n}+q^{96n} \right) \left( 1-q^{96n} \right)^{b-1}.$$ By considering the possible ways that terms in this product could multiply to $q^{96 + r}$, we deduce that 
$$A_{4,b,1}(4)= -\binom{b-1}{1}+1+1+1+1=-b+5.$$ 
Hence $A_{4,b,1}(4)$ is a polynomial in $b$ of degree at most $\left\lfloor \frac{4}{4 \cdot 1} \right\rfloor = 1$.

We now show that for $a \geq 4$, $A_{a,b,c}(m)$ is a nonzero polynomial in $b$ for all $m$. Therefore $A_{a,b,c}(m_0) = 0$ for finitely many $b$.  Plugging $b=a$ into (\ref{DefFabc}), we have: 
\begin{equation}
\prod_{n = 1}^\infty \frac{(1-q^{24an})^{a}}{(1-q^{24n})}= \sum_{m = 0}^\infty A_{a,a,c}(m)q^{24m}.
\end{equation}
\noindent We show $A_{a,a,c}(m) \neq 0$ for $m \geq 0$. Note that from Corollary 1.9 in \cite{Han} we have the following:
\begin{align*}
\sum_{m = 0}^\infty A_{a,a,c}(m)q^{24m} = \prod_{n = 1}^\infty \frac{(1-q^{24an})^{a}}{1-q^{24n}} &= \sum_{\lambda \text{ is an $a$-core }}q^{24 |\lambda|} \\
&= \sum_{m = 0}^\infty \# \{ \lambda \colon | \lambda |=m \text{ and } \lambda \text{ is an $a$-core } \}q^{24m}.
\end{align*}
\noindent It then follows from Granville and Ono's \cite{Granville} proof of the $t$-core partition conjecture  that $\# \{ \lambda \colon | \lambda | =m \text{ and } \lambda \text{ is an $a$-core } \} \geq 1$, since we are assuming $a \geq 4$. Thus, $A_{a,b,c}(m)$ is a nonzero polynomial in $b$ for all $m \geq 0$. 

\end{proof}
The following lemma is a direct generalization of Theorem 1.1 in \cite{REU}. In order to prove this lemma, we recall the P\'olya-Vinogradov Inequality which states that given a nontrivial Dirichlet character $\chi$ with modulus $m$, the following holds: 
\begin{equation}
\big| \sum_{x=1}^{h}\chi(x) \big| \leq 2 \sqrt{m} \log m.
\end{equation}

 \begin{lemma} \label{finitelymanytriples}
If $a \geq 4$ there exists finitely many triples $(a,b,c)$ so that $F_{a,b,c}(z)$ is lacunary.
 \end{lemma}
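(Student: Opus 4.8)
The plan is to leverage Lemma~\ref{finb}, which already does half the work: since for each fixed pair $(a,c)$ there are only finitely many odd $b$ with $F_{a,b,c}(z)$ lacunary, it suffices to prove that only finitely many pairs $(a,c)$ admit any lacunary $F_{a,b,c}(z)$. I would establish this by bounding the product $ac$. The mechanism combines the Hecke-operator vanishing forced by complex multiplication with the positivity of $a$-core counts from Granville and Ono, and then controls the relevant Hecke parameter through the P\'olya--Vinogradov inequality.

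First I would set up the governing dichotomy. Suppose $F_{a,b,c}(z)$ is lacunary and fix $s$ satisfying the hypotheses of Lemma~\ref{cmlem}; then $F_{a,b,c}(z)\mid T_s = 0$, and, exactly as in the proof of Lemma~\ref{finb}, there is an index $m_0 \le s-1$ for which $A_{a,b,c}(m_0)$ occurs as an isolated Fourier coefficient of $F_{a,b,c}(z)\mid T_s$, so that $A_{a,b,c}(m_0)=0$. Observe from the product in \eqref{DefFabc} that the factor $(1-q^{24acn})^{b-a}$ contributes nothing below $q^{24ac}$. Hence for $m_0 < ac$ the coefficient $A_{a,b,c}(m_0)$ is independent of $b$ and equals the coefficient of $q^{24m_0}$ in $\prod_{n\ge 1}(1-q^{24an})^a/(1-q^{24n})$, namely the number of $a$-cores of $m_0$. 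By Granville and Ono's \cite{Granville} proof of the $t$-core conjecture this count is at least $1$ for $a \ge 4$, contradicting $A_{a,b,c}(m_0)=0$. Therefore $ac \le m_0 \le s-1$, so every valid $s$ satisfies $ac < s$.

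Next I would bound the least valid $s$. Using quadratic reciprocity, the requirement that $s$ be coprime to $N=576ac$ and a non-norm in every $\mathbb{Q}(\sqrt{-d})$ with $d \mid N$ can be rephrased as the requirement that $s$ lie in a prescribed union of residue classes modulo $8\,\mathrm{rad}(N)$: it suffices to impose $s \equiv 7 \pmod 8$ together with a prescribed value of the Legendre symbol $\left(\frac{s}{q}\right)$ for each odd prime $q \mid N$, which forces $s$ to be inert in all the relevant imaginary quadratic fields simultaneously. Writing the indicator of these conditions as a short linear combination of the associated quadratic characters and applying the P\'olya--Vinogradov inequality to each nontrivial character, the main term of the resulting count dominates once the length exceeds a constant multiple of $2^{2\omega(N)}\sqrt{N}\log N$. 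Since $2^{\omega(N)} \ll N^{\varepsilon}$, this produces a valid $s$ with $s \ll N^{1/2+\varepsilon} \ll (ac)^{1/2+\varepsilon}$.

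Combining the two inequalities yields $ac < s \ll (ac)^{1/2+\varepsilon}$, which forces $ac$ to be bounded by an absolute constant. Thus only finitely many pairs $(a,c)$ can support a lacunary $F_{a,b,c}(z)$, and invoking Lemma~\ref{finb} for each such pair gives only finitely many triples $(a,b,c)$ in total. I expect the main obstacle to be the uniform control of the least valid $s$: the non-norm condition of Lemma~\ref{cmlem} ranges over all $d \mid N$ at once, so the character sum involves roughly $2^{\omega(N)}$ quadratic characters, and one must verify that this proliferation is absorbed into the $N^{\varepsilon}$ factor so that the bound on $s$ remains genuinely sublinear in $ac$. It is precisely the effectivity of P\'olya--Vinogradov, rather than an ineffective appeal to Chebotarev, that makes this quantitative comparison possible.
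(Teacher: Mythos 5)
Your proposal is correct and follows essentially the same route as the paper: reduce to bounding $ac$, use Granville--Ono's $t$-core positivity to show that $A_{a,b,c}(m)$ is a nonzero constant (independent of $b$) for $m < ac$, and use a character-sum argument with the P\'olya--Vinogradov inequality to produce a valid $s < ac$ once $ac$ is large, the factor $2^{\omega(N)} \ll N^{\varepsilon}$ being exactly what the paper absorbs by splitting on the number $m$ of prime divisors. The only differences are cosmetic: you argue by contraposition with an asymptotic bound $s \ll (ac)^{1/2+\varepsilon}$ and work with $\mathrm{rad}(N)$ uniformly, whereas the paper derives an explicit constant $\kappa$ and treats the non-squarefree case of $6ac$ separately.
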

 
\begin{proof}
Fix $a \geq 4$  and $c \geq 1$. Let $a'$ be the square-free part of $576ac$, the level of $F_{a,b,c}(z)$. We will show that there exists an $s \in \mathbb{Z}$ satisfying the conditions of Lemma \ref{cmlem}. In order to do this,  it suffices to show that there exists an $s \in \mathbb{Z}$ such that $ \left( \frac{-1}{s} \right) = -1$ and $\left(  \frac{-p}{s} \right)= -1$ for all primes $p \mid a'$. These conditions imply that $\left( \frac{-d}{s} \right)= -1$ for all positive integers $d$ dividing the level of $F_{a,b,c}(z)$ by properties of the Kronecker symbol. Recall that a prime $q$ is inert in the ring of integers of $\mathbb{Q}(\sqrt{-d})$ if $\left( \frac{-d}{q} \right) = -1$. By considering the prime factorization of $s$ and the multiplicativity of the Kronecker symbol, there does not exist any ideal of norm $s$ in the ring of integers of $\mathbb{Q}(\sqrt{-d})$ if $\left( \frac{-d}{s} \right)= -1$. 

For each pair $(a,c)$, there exists an $s$ satisfying the conditions of Lemma \ref{cmlem}. We show that for all but finitely many pairs, there is an $s < ac$. By (\ref{finb}), the set of odd $b$ for which $F_{a,b,c}(z)$ is lacunary is a subset of the roots of the nonzero polynomials $A_{a,b,c}(0), \ldots, A_{a,b,c}(s-1)$. All of these polynomials have degree less than or equal to $\big\lfloor \frac{s}{ac} \big\rfloor$, so no $b$ can exist for all but finitely many pairs $(a,c)$. 

We first assume that $a'=6ac$. We show that there is a $\kappa$ such that whenever $a' = 6ac \geq \kappa$ there exists an $s < ac$ satisfying the desired conditions. It suffices to find an $s \equiv 23 \pmod{24}$ such that $\left( \frac{-p}{s} \right) =-1$ for all primes $p \mid ac$ $\left( \text{note that if $s \equiv 23 \pmod{24}$, $\left( \frac{-2}{s} \right) = \left( \frac{-3}{s}\right) =-1$} \right)$. Write $ac=p_{1}p_{2} \cdots p_{m}$, where the $p_{i}$ are distinct primes not equal to $2$ or $3$. Consider the following linear combination of Dirichlet characters: \begin{equation*}
g_{ac}(s) \coloneqq \frac{1}{2^{m}} \cdot \displaystyle{\sum_{d \mid ac} \mu(d)\psi_{d}(s)},
\end{equation*} where $\psi_{d}(s) \coloneqq \displaystyle{\prod_{p \mid d}} \left( \frac{-p}{s}\right)$ and by convention $\psi_1(s) = 1$. By inducting on the number of prime divisors of $ac$ and noticing that if $q$ is a prime not dividing $ac$ then $g_{ac \cdot q}(s) = \frac{\left(1- \left( \frac{-q}{s} \right) \right)g_{ac}(s)}{2}$, one can show that 
$$ g_{ac}(s)= 
\begin{cases}
1 \text{ if } \left( \frac{-p}{s} \right)=-1 \text{ for all } p \mid ac \\
0 \text{ otherwise.}
\end{cases}$$
Thus, it is sufficient to find an $s$ satisfying $g_{ac}(s)=1$ and $s \equiv 23 \pmod{24}$. We will do this by using the P\'olya-Vinogradov Inequality to show that, if $ac$ is sufficiently large, then 
\begin{equation}\label{sac} 
\sum_{\substack{s < ac \\ s \equiv 23 (24)}} \sum_{d \mid a} \mu(d)\psi_{d}(s) >0.
\end{equation}
Let $D$ be the set of Dirichlet characters modulo $24$. Since $\displaystyle{\sum_{\chi \in D} \chi(1)=8}$, we have:
\begin{align*}
\sum_{\substack{s < ac \\ s \equiv 23 (24)}} \sum_{d \mid ac} \mu(d)\psi_{d}(s) &= \sum_{\substack{s < ac \\ s \equiv 23 (24)}} 1 + \sum_{\substack{d \mid ac \\ d>1}} \mu(d) \sum_{\substack{s <ac \\ s \equiv 23 (24)}} \psi_{d}(s) \\
&= \sum_{\substack{s<ac \\ s\equiv 23 (24)}} 1 + \sum_{\substack{d \mid ac \\ d>1}} \mu(d) \sum_{s <ac} \psi_{d}(s) \sum_{\chi \in D} \frac{\chi(-s)}{8} \\
&\geq \frac{ac}{24}-1+ \sum_{\substack{d \mid ac \\ d>1}} \frac{1}{8} \mu(d) \psi_{d}(-1) \sum_{\chi \in D} \sum_{s < ac} \psi_{d}(-s) \chi(-s). 
\end{align*}
Note that $\psi_d \circ \chi$ is a nontrivial Dirichlet character modulo $24d$. So, by applying the P\'olya-Vinogradov Inequality to the innermost sum: 
\begin{align*} 
\sum_{\substack{s <ac \\ s \equiv 23(24)}} \sum_{d \mid ac} \mu(d) \psi_{d}(s) &\geq \frac{ac}{24} - 1 - \sum_{\substack{d \mid ac \\ d >1}} 2 \sqrt{24d} \log(24d) \\
&\geq \frac{ac}{24} -1 -2^{m+1} \sqrt{24ac} \log(24ac).
\end{align*}

If $m \geq 12$, then $\frac{ac}{24} -1 -2^{m+1} \sqrt{24ac} \log(24ac) >0$. Hence, there exists an integer $s <ac$ satisfying the conditions of Lemma \ref{cmlem}. So in this case, the set of $b$ for which $F_{a,b,c}(z)$ is lacunary is empty. For each $m <12$, the inequality (\ref{sac}) can only fail if $ac < \kappa$, where $\kappa = 5 \cdot 7 \cdot \ldots \cdot 43 \approx 2.18 \times 10^{15}$. So if $6ac$ is square-free, there are only finitely many pairs $(a,c)$ for which there could exist odd $b$ where $F_{a,b,c}(z)$ is lacunary. 

Now we turn to the case where $6ac$ is not square-free. If $\frac{a'}{6}$ has a corresponding $s \leq \frac{a'}{6} < ac,$ then $F_{a,b,c}(z)$ is not lacunary for any choice of $b$. If $\frac{a'}{6}$ does not have an $s \leq \frac{a'}{6},$ then $F_{a,b,c}(z)$ can only be lacunary for pairs $(a,c)$ such that $s \leq ac$ where $s$ is chosen minimally. By the above, there are only finitely many possible $a',$ each of which yields finitely many pairs $(a,c)$ for which there could exist odd $b$ such that $F_{a,b,c}(z)$ is lacunary. It follows from Lemma \ref{finb} that there can be only finitely many pairs $(a,b,c)$ for which $F_{a,b,c}(z)$ is lacunary. 
\end{proof}

We now prove Theorem \ref{finite}. 

\begin{proof}
We demonstrate the process of showing that the only possible triples $(a,b,c)$ for which $F_{a,b,c}(z)$ could be lacunary are $(4,5,3)$, $(4,5,5)$, and $(4,5,11)$. We have the following algorithm: \\

\noindent (1) From Lemma \ref{finitelymanytriples} we know that there are finitely many triples $(a,b,c)$ such that $F_{a,b,c}(z)$ is lacunary. Further, all such triples have the property that $a' \leq 6\kappa$ where $a'$ and $\kappa$ are defined above. For each $a' \leq 6\kappa$, we check whether there exists an $s \equiv 23 \pmod{24}$ with $s < \frac{a'}{6}$ and $g_{a'/6}(s) = 1$. 
For all $a'$ with such an $s$, the possible pairs $(a,c)$ corresponding to $a'$ yield no odd $b$ for which $F_{a,b,c}(z)$ is lacunary. For all $a'$ without such an $s,$ there are finitely many pairs $(a,c)$ such that $F_{a,b,c}(z)$ could be lacunary. Let $S_{ac}$ be the set of all such pairs.  \\

\noindent (2) For all $(a,c) \in S_{ac}$, choose $s$ minimally and use Lagrange interpolation to construct the polynomials $A_{a,b,c}(ac), \ldots ,A_{a,b,c}(s-1)$ \footnote{Note that we do not need to consider the polynomials $A_{a,b,c}(0), \ldots, A_{a,b,c}(ac-1) $ since these polynomials are nonzero constant polynomials and hence have no roots.}. Now find all of the odd positive integer roots of these polynomials. By repeating this processes for all $(a,c)$ with corresponding $s$, we have a set of all possible $(a,b,c)$. Denote this set by $S_{abc}$. \\

\noindent (3) For all $(a,b,c) \in S_{abc}$, choose prime $p$ minimally satisfying $p \equiv 23 \pmod{24}$ and $g_{a'/6}(p)=1$. Recall the definition of $m_{0}$ in Lemma \ref{finb}. Find $m_{0} \in \mathbb{Z}/p\mathbb{Z}$ such that $24m_{0} \equiv -r \pmod{p}$. If $A_{a,b,c}(m_{0}) \neq 0$ then $F_{a,b,c}(z)$ cannot be lacunary. Denote the set of remaining triples $S_{abc}'$. \\

\noindent (4) For all $(a,b,c) \in S_{abc}',$ find $p' > p$ satisfying $p' = 23 \pmod{24}$ and $g_{a'/6}(p') = 1$ and repeat step 3. This should be repeated multiple times to eliminate further candidates. \\

Following steps (1)-(4), the only remaining triples $(a,b,c)$ such that $F_{a,b,c}(z)$ could be lacunary are $(4,5,3)$, $(4,5,5)$, and $(4,5,11)$.

\end{proof}

\section{Discussion of Theorem \ref{finite}}\label{examples}
Due to our results in \S \ref{proofs}, we conjecture that the series $F_{4,5,3}(z), F_{4,5,5}(z)$, and $F_{4,5,11}(z)$ are lacunary. In theory, it is not difficult to prove this conjecture. Namely, one has to systematically compute all of the weight $2$ modular forms with complex multiplication on a suitable level. Our calculations reveal that if this conjecture is true, then these $F_{a,b,c}(z)$ will be linear combinations of CM forms corresponding to fields in the table given below \footnote{There are fewer modular forms on lower level; hence, one should consider the optimal level as discussed in the remark in \S \ref{etaquot}.}. We have been unable to resolve this issue due to limits on our computational power.

\begin{center}
\begin{table}[h]
\begin{tabular}{|c | c| c |} \hline
 & Optimal level of $F_{a,b,c}(z)$  &  CM fields of $F_{a,b,c}(z)$  \\ \hline 
$F_{4,5,3}(z)$ & $2304$ & $\mathbb{Q}(\sqrt{-1})$, $\mathbb{Q}(\sqrt{-2})$, and $\mathbb{Q}(\sqrt{-3})$ \\ \hline
$F_{4,5,5}(z)$ & $576 \cdot 4 \cdot 5$ & $\mathbb{Q}(\sqrt{-1})$, $\mathbb{Q}(\sqrt{-2})$, $\mathbb{Q}(\sqrt{-3})$, and $\mathbb{Q}(\sqrt{-5})$ \\ \hline
$F_{4,5,11}(z)$ & $576 \cdot 4 \cdot 11$ & $\mathbb{Q}(\sqrt{-1})$, $\mathbb{Q}(\sqrt{-2})$, $\mathbb{Q}(\sqrt{-3})$, and $\mathbb{Q}(\sqrt{-11})$ \\ \hline

\end{tabular}
\end{table}
\end{center}

\section{Acknowledgements}
The authors wish to thank Professor Ken Ono and Professor Larry Rolen for their invaluable guidance and suggestions. They would also like to thank Emory University, the Asa Griggs Candler Fund, and NSF grant DMS-1557960.
 
\bibliographystyle{abbrv}
\bibliography{biblio}

\end{document}